\newcommand{\bburl}[1]{\textcolor{blue}{\url{#1}}}
\newcommand{\seqnum}[1]{\href{https://oeis.org/#1}{\rm \underline{#1}}}
\theoremstyle{plain}
\numberwithin{equation}{section}
\newtheorem{thm}{Theorem}[section]
\newtheorem{proposition}[thm]{Proposition}
\DeclareFixedFont{\ttb}{T1}{txtt}{bx}{n}{12} 
\DeclareFixedFont{\ttm}{T1}{txtt}{m}{n}{12}  
\definecolor{deepblue}{rgb}{0,0,0.5}
\definecolor{deepred}{rgb}{0.6,0,0}
\definecolor{deepgreen}{rgb}{0,0.5,0}
\newcommand\pythonstyle{\lstset{
language=Python,
basicstyle=\ttm,
morekeywords={self},              
keywordstyle=\ttb\color{deepblue},
emph={MyClass,__init__},          
emphstyle=\ttb\color{deepred},    
stringstyle=\color{deepgreen},
frame=tb,                         
showstringspaces=false
}}
\newcommand\pythoninline[1]{{\pythonstyle\lstinline!#1!}}
\numberwithin{equation}{section}
\DeclareFontFamily{U}{mathx}{}
\DeclareFontShape{U}{mathx}{m}{n}{<-> mathx10}{}
\DeclareSymbolFont{mathx}{U}{mathx}{m}{n}
\DeclareMathAccent{\widehat}{0}{mathx}{"70}
\DeclareMathAccent{\widecheck}{0}{mathx}{"71}
\begin{document}

\title{Composite Numbers in an Arithmetic Progression}

\author{H\`ung Vi\d{\^e}t Chu}
\address{Department of Mathematics, Texas A\&M University, College
  Station, TX 77843, USA}
  \email{hungchu1@tamu.edu}

\author{Steven J. Miller}
\email{\textcolor{blue}{\href{mailto:sjm1@williams.edu}{sjm1@williams.edu},
\href{mailto:Steven.Miller.MC.96@aya.yale.edu}{Steven.Miller.MC.96@aya.yale.edu}}}
\address{Department of Mathematics and Statistics, Williams College, Williamstown, MA 01267, USA}

\author{Joshua M. Siktar}
\address{Department of Mathematics, Texas A\&M University, College
  Station, TX 77843, USA}
  \email{jmsiktar@tamu.edu}

\thanks{}

\subjclass[2020]{97D50 (primary), 97D60, 11-02}

\keywords{problem writing; student assessment; Dirichlet's Theorem, density of primes}

\begin{abstract}
One challenge (or opportunity!) that many instructors face is how varied the backgrounds, abilities, and interests of students are. In order to simultaneously instill confidence in those with weaker preparations and still challenge those able to go faster, an instructor must be prepared to give problems of different difficulty levels. Using Dirichlet's Theorem as a case study, we create and discuss a family of problems in number theory that highlight the relative strengths and weaknesses of different ways to approach a question and show how to invite students to extend the problems and explore research-level mathematics.
\end{abstract}

\maketitle
\tableofcontents

\section{Introduction}\label{sec: probintro}

\tableofcontents
\textit{Even fairly good students, when they have obtained the solution of the problem and written down neatly the argument, shut their books and look for something else. Doing so, they miss an important and instructive phase of the work. … A good teacher should understand and impress on his students the view that no problem whatever is completely exhausted. One of the first and foremost duties of the teacher is not to give his students the impression that mathematical problems have little connection with each other, and no connection at all with anything else. We have a natural opportunity to investigate the connections of a problem when looking back at its solution.}

\vspace{0.3 cm}

This is a quote from George P\'{o}lya's famous book \textit{How to Solve it} \cite{polya2014solving} that encapsulates the problem-solving spirit we hope teachers will instill in their students after reading this article.

As we travel from school to school, it's important to realize that what works well in one institution may not in another. The genesis of this article is from the experiences of one of the authors\footnote{This is one reason we have multiple people writing this, to protect identities! The other is that we've  all known each other for years, working and talking with each other all the time.} who at one point in his career moved from schools where it was the norm to have a 50\% be an excellent score in an upper level exam to a place where such an average led to students complaining and doubting their ability to do mathematics! Being told that there was a curve did no good, nor comparisons such as getting a hit in one out of every three at-bats would have you ranked as one of the greatest ballplayers of all time. The students worried that the numerical value indicated they could not do mathematics.

There are many solutions to such a situation. The chosen one was to add questions to assignments / exams which were hopefully straightforward and would be easily solved by the entire class and decrease the point count of the other problems, thus effectively baking in the curve. For this to work, though, it's necessary to find problems that everyone will get! We focus on one such problem for several reasons. First, for colleagues just starting out, it highlights the creative process in building an exam. Second, over the years this problem has been discussed with several people, and recently these conversations led to intriguing explorations on alternate ways to solve the problem, and future non-trivial questions one can ask, highlighting how one can find research problems everywhere! We thus decided to write up our dialogue, demonstrating that there are often multiple ways of solving a problem, each with its strengths and weaknesses\footnote{See Terry Tao's blogpost \cite{tao2008limitations} that discusses this in more detail.}.

\vspace{0.3 cm}

So what's our problem of focus? There are many beautiful results in number theory; a particularly striking one is Dirichlet's Theorem.\footnote{See \url{https://www.youtube.com/watch?v=zG185Ef1gPM} for a video lecture on the motivation behind Dirichlet's Theorem.} Recall every positive integer falls into one of three mutually distinct categories: 1 is the unit, an integer $p > 1$ is prime if it has precisely two factors (itself and 1), and all other numbers are composites. By the Fundamental Theorem of Arithmetic (see the classic \cite{HW}, which is full of gems to read), every integer can be factored uniquely as a product of primes in increasing order, and this is why we do not want one to be considered a prime. If that were true then we would no longer have unique factorization due to the ability to add any number of factors of 1.

\begin{thm}[Dirichlet] If $a$ and $b$ are relatively prime\footnote{Sometimes one assumes that our numbers are positive; we leave it to the reader to show that such an assumption is harmless.} then there are infinitely many primes congruent to $b$ mod $a$. In other words, the sequence $an + b$ is prime for infinitely many $n$.
\end{thm}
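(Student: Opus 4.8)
The plan is to run Dirichlet's classical analytic argument, which in fact establishes the stronger assertion that $\sum_{p\,\equiv\,b\,(\mathrm{mod}\,a)}1/p$ diverges; since a sum of reciprocals over a finite set of primes is finite, this forces infinitely many primes congruent to $b$ modulo $a$. (The case $\phi(a)=1$, i.e.\ $a\in\{1,2\}$, is trivial, so assume $\phi(a)\ge2$.) Two ideas drive the proof: a ``Fourier transform on $(\mathbb{Z}/a\mathbb{Z})^\times$'' that isolates the residue class $b$, and the behavior near $s=1$ of Dirichlet $L$-functions.

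First I would introduce the $\phi(a)$ Dirichlet characters modulo $a$ --- the homomorphisms $\chi\colon(\mathbb{Z}/a\mathbb{Z})^\times\to\mathbb{C}^\times$, extended by $\chi(n)=0$ whenever $\gcd(n,a)>1$ --- together with the orthogonality relation
\[
\frac{1}{\phi(a)}\sum_{\chi}\overline{\chi(b)}\,\chi(n)=\begin{cases}1,& n\equiv b\ (\mathrm{mod}\ a),\\ 0,&\text{otherwise,}\end{cases}
\]
which holds since $\gcd(a,b)=1$. To each $\chi$ I attach the $L$-function $L(s,\chi)=\sum_{n\ge1}\chi(n)n^{-s}$, which for $\operatorname{Re}(s)>1$ converges absolutely and, because $\chi$ is completely multiplicative, equals the Euler product $\prod_p(1-\chi(p)p^{-s})^{-1}$. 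Taking logarithms, $\log L(s,\chi)=\sum_p\chi(p)p^{-s}+B_\chi(s)$, where the higher prime-power remainder $B_\chi(s)=\sum_p\sum_{k\ge2}\chi(p)^k/(kp^{ks})$ stays bounded as $s\to1^+$, being dominated by $\sum_p\sum_{k\ge2}p^{-k}<\infty$. Multiplying by $\overline{\chi(b)}$, summing over $\chi$, and using orthogonality gives, for real $s>1$,
\[
\sum_{p\,\equiv\,b\,(\mathrm{mod}\,a)}\frac{1}{p^{s}}=\frac{1}{\phi(a)}\sum_{\chi}\overline{\chi(b)}\,\log L(s,\chi)+O(1)\qquad(s\to1^+).
\]

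Now I would analyze the right-hand side as $s\to1^+$. For the principal character, $L(s,\chi_0)=\zeta(s)\prod_{p\mid a}(1-p^{-s})$, so the simple pole of $\zeta$ at $s=1$ makes $\log L(s,\chi_0)\to+\infty$; this is the term driving divergence. For a non-principal $\chi$, the partial sums $\sum_{n\le N}\chi(n)$ are bounded, so partial summation extends $L(s,\chi)$ holomorphically to $\operatorname{Re}(s)>0$; hence $\log L(s,\chi)$ remains bounded near $s=1$ \emph{provided} $L(1,\chi)\ne0$. Granting this non-vanishing for every $\chi\ne\chi_0$, the display above tends to $+\infty$, which yields $\sum_{p\equiv b}1/p=\infty$ and the theorem.

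The crux is therefore the non-vanishing $L(1,\chi)\ne0$ for all non-principal $\chi$ modulo $a$, and here my plan is the product argument. Set $\zeta_a(s):=\prod_{\chi}L(s,\chi)$. Grouping the characters by their value on each prime $p\nmid a$, one checks that on $\operatorname{Re}(s)>1$ the function $\zeta_a$ is a Dirichlet series with non-negative coefficients whose coefficient at $p^{\phi(a)}$ is at least $1$; thus it dominates $\sum_{p\nmid a}p^{-\phi(a)s}$ coefficientwise and in particular diverges at $s=1/\phi(a)$. If some $L(1,\chi)$ were zero, the zero of $L(s,\chi)$ at $s=1$ (reinforced by the zero of $L(s,\overline{\chi})$ when $\chi$ is complex) would cancel the simple pole contributed by $\chi_0$, making $\zeta_a$ holomorphic on all of $\operatorname{Re}(s)>0$; Landau's theorem on Dirichlet series with non-negative coefficients would then force $\zeta_a$ to converge throughout $\operatorname{Re}(s)>0$, contradicting divergence at $s=1/\phi(a)\in(0,1)$. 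I expect the point most delicate to write up cleanly is the case of a \emph{real} non-principal character, where the zero of $L(s,\chi)$ at $s=1$ is only simple and is not reinforced by a conjugate character --- historically the subtlest step in Dirichlet's original treatment --- although the Landau argument above handles all $\chi$ uniformly. One could add, as a natural lower-difficulty companion for students, that special residues such as $b=\pm1$ admit an elementary, purely algebraic proof via cyclotomic polynomials.
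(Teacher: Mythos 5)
The paper does not prove Dirichlet's Theorem at all: it states the result, remarks that the only known proofs establish the stronger statement that the primes equidistribute among the admissible residue classes, and defers to the standard references (Davenport's \emph{Multiplicative Number Theory} and Serre's \emph{A Course in Arithmetic}). Your proposal is, in outline, exactly the proof found in those references, so there is nothing in the paper to contrast it with. As a sketch it is correct and the logical skeleton is complete: orthogonality of characters to isolate the class $b \bmod a$, the Euler product and the boundedness of the higher prime-power tail of $\log L(s,\chi)$, the pole of $L(s,\chi_0)$ at $s=1$ driving divergence, analytic continuation of the non-principal $L(s,\chi)$ to $\operatorname{Re}(s)>0$ via bounded partial sums, and non-vanishing of $L(1,\chi)$ via the non-negativity of the Dirichlet coefficients of $\zeta_a(s)=\prod_\chi L(s,\chi)$ (which dominates $\sum_{p\nmid a}p^{-\phi(a)s}$) together with Landau's theorem. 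You correctly flag that the Landau route disposes of the real-character case uniformly, which is the step that is genuinely delicate in treatments that instead count zeros of conjugate pairs. The only items left to expand in a full write-up are routine: the continuation of $\zeta(s)$ itself to $\operatorname{Re}(s)>0$ with a simple pole at $s=1$, and the verification that $\zeta_a(s)=\prod_{p\nmid a}\bigl(1-p^{-f(p)s}\bigr)^{-\phi(a)/f(p)}$ with $f(p)$ the order of $p$ modulo $a$, which is where the non-negative coefficients and the coefficient $\ge 1$ at $n=p^{\phi(a)}$ come from.
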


What's fascinating is that the proof used is the same one that's needed for a \emph{much} easier question: prove there is \emph{at least one prime} congruent to $b$ modulo $a$ when $a$ and $b$ are relatively prime! That's right -- the way we know how to prove there's at least one such prime is to prove there are infinitely many!\footnote{It's worse than that. If $\pi(x)$ is the number of primes up to $x$, and $\pi_{a,b}(x)$ the number of primes $p \le x$ congruent to $b$ modulo $a$, then Dirichlet's Theorem is proved by showing to first order $\pi_{a,b}(x) = \pi(x)/\varphi(b)$, with $\varphi$ being the Euler totient function counting the number of integers at most $n$ relatively prime to $n$. In other words, to show there is \emph{one} prime congruent to $b$ modulo $a$ we show there are infinitely many, and the way we do that is we show all possible residue classes $b$ mod $a$ (i.e., those $b$ relatively prime to $a$) to first order have the same number of primes up to $x$ as $x \to \infty$!}

A moment's reflection shows the conditions are necessary; if $a$ and $b$ aren't relatively prime, then there's at most one prime congruent to $b$ modulo $a$, namely $\gcd(a,b)$, if $\gcd(a,b)$ is prime. The difficulty is proving this sufficient condition is necessary.

One of us thus proposed the following problem, attributed to the aptly named fictional mathematician Telhcirid. 
\begin{proposition}[Telhcirid]\label{Prop: Telhcirid}
If $a$ and $b$ are integers (with $a$ nonzero), then there are infinitely many composites congruent to $b$ modulo $a$.
\end{proposition}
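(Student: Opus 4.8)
The plan is to resist the temptation to show that $an+b$ itself is composite for many $n$ — Dirichlet's Theorem is precisely the warning that this fails badly, since $an+b$ is \emph{prime} infinitely often when $\gcd(a,b)=1$ — and instead to manufacture composites in the class $b \bmod a$ directly, exhibiting each as a genuine product of two factors. The one idea doing all the work is that if $C \equiv b \pmod a$ then $C$ times anything of the form $1+ak$ is again $\equiv b \pmod a$, because $C(1+ak) = C + a(Ck)$.

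Concretely, I would proceed in three short steps. First, note that since $a \neq 0$ the residue class $\{\,b+ak : k \in \mathbb{Z}\,\}$ is unbounded above (let $k \to +\infty$ if $a>0$, or $k \to -\infty$ if $a<0$), so there is an integer $j$ with $C := b+aj \ge 2$; fix one such $C$. Second, record the identity
\[
C(1+ak) \;=\; C + a(Ck) \;\equiv\; C \;\equiv\; b \pmod{a},
\]
valid for every integer $k$, so each number $C(1+ak)$ lies in the desired class. Third, restrict to those $k$ with $1+ak \ge 2$ (all $k \ge 1$ if $a>0$, all sufficiently negative $k$ if $a<0$); then $C(1+ak)$ is a product of two integers each at least $2$, hence a positive composite, and as $k$ ranges over this infinite set the values $C(1+ak)$ are pairwise distinct. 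This yields infinitely many composites congruent to $b$ modulo $a$.

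Honestly, there is no serious obstacle; the "hard part" is purely psychological — recognizing that one should not try to prove $an+b$ is composite, but should instead take a known element of the class (which exists trivially, and can be taken $\ge 2$) and inflate it multiplicatively. The only points deserving a sentence of care are the bookkeeping on the sign of $a$ when arranging the inequalities $C \ge 2$ and $1+ak \ge 2$, and the paper's convention that "composite" refers to positive integers, which is why I arrange both factors to be $\ge 2$ rather than merely $\neq 0,\pm 1$. As an alternative one could fix any prime $p \nmid a$, use that multiplication by $a$ permutes $\mathbb{Z}/p\mathbb{Z}$ to get infinitely many $k$ with $p \mid b+ak$, and observe that all but finitely many of these satisfy $|b+ak| \ge 2p$ and are therefore composite; this is worth mentioning as it is the version that most naturally generalizes and connects back to the divisibility heuristics behind Dirichlet's Theorem.
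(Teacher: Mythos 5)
Your proof is correct. It is closest in spirit to the paper's second (constructive) proof, but it reorganizes that argument in a way that genuinely changes its shape, so a comparison is worthwhile. The paper parametrizes by $n$ and asks which $n$ make $an+b$ composite; this forces a case split: for $|b|>1$ take $n=bm$ so that $an+b=b(am+1)$, while for $b=\pm 1$ one needs the ``divinely inspired'' choice $n=a(am+b)+m$ giving $(a^2+1)(am+b)$ --- the step the paper reports most students miss. Your single move of first replacing $b$ by a representative $C=b+aj\ge 2$ of the same residue class, and then multiplying by $1+ak$, dissolves that case split entirely: it is the $|b|>1$ construction applied to a better-chosen representative, and the identity $C(1+ak)=C+a(Ck)$ makes membership in the class transparent. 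Note that both constructions share the same skeleton --- a product of something $\equiv 1 \pmod a$ with something $\equiv b \pmod a$ --- but the paper fixes the unit-class factor ($a^2+1$) and varies the $b$-class factor ($am+b$), whereas you fix the $b$-class factor ($C$) and vary the unit-class one ($1+ak$). Your version buys simplicity, requires no inspiration, and handles the sign bookkeeping cleanly (the paper's $|b|>1$ subsequence $b(am+1)$ can in fact be negative when $b<-1$, a point it glosses over). What the paper's framing buys is that staying with ``which inputs $n$ give composites'' feeds directly into its later refinements --- the density of such $n$, the longest run of primes being at most $a^2$, and the $k$-composite constructions --- where one must control the inputs and not merely exhibit outputs; and its first proof via the zero density of primes yields the strictly stronger statement that $100\%$ of $n$ give composites, which your construction (like the paper's explicit ones) does not.
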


In the remaining sections, we discuss various proofs, the benefits and disadvantages of the different approaches, and describe how to build research questions from these conversations.


\section{Solution}

Unlike in Dirichlet's Theorem, we don't need $a$ and $b$ to be relatively prime in Proposition \ref{Prop: Telhcirid}. There are many ways we could prove this. One option is to use the following result, for if we can show that in the limit, zero percent of integers are prime, then any arithmetic progression must in the limit have almost all of its numbers composite.\footnote{Note there's a difference between 100\% and all. In the limit 100\% of integers are not squares: the percent of integers at most $x$ that are not squares is approximately $(x - x^{1/2})/x = 1 - x^{-1/2}$; however, there are infinitely many squares.}

\begin{proposition}[Zero density of primes]\label{Prop: ZeroDense}
    The set of prime numbers has zero density. That is, if $\pi(x)$ is the number of prime numbers at most $x$, then
    \begin{equation}\label{Eq: ZeroDensity}
        \lim_{x \rightarrow \infty}\frac{\pi(x)}{x} \ = \ 0.
    \end{equation}
\end{proposition}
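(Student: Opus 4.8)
The plan is to avoid the Prime Number Theorem (which yields $\pi(x)\sim x/\log x$ and hence \eqref{Eq: ZeroDensity} at once, but is far heavier machinery than this statement needs) and instead run a short sieve. First I would fix an integer $k \ge 1$, let $p_1 < p_2 < \cdots < p_k$ be the first $k$ primes, and set $m = p_1 p_2 \cdots p_k$. Every prime larger than $p_k$ is coprime to $m$, and there are exactly $k$ primes that are not, so
\begin{equation*}
\pi(x) \ \le \ k + \#\{\, 1 \le n \le x : \gcd(n,m) = 1 \,\}.
\end{equation*}
Next I would evaluate the count on the right by inclusion--exclusion over the divisors of $m$, namely $\sum_{d \mid m}\mu(d)\lfloor x/d\rfloor$; since $m$ is squarefree with $k$ prime factors it has $2^k$ divisors, so replacing each $\lfloor x/d\rfloor$ by $x/d$ costs at most $2^k$ in total, giving
\begin{equation*}
\pi(x) \ \le \ k + 2^k + x\prod_{i=1}^{k}\left(1 - \frac{1}{p_i}\right).
\end{equation*}

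Dividing by $x$ and letting $x \to \infty$ with $k$ held fixed would then give $\limsup_{x\to\infty}\pi(x)/x \le \prod_{i=1}^{k}(1-1/p_i)$. Because the left-hand side does not depend on $k$, it remains only to check that $\prod_{i=1}^{k}(1-1/p_i) \to 0$ as $k\to\infty$. For this I would invoke Euler's expansion of the harmonic series: for every $y$,
\begin{equation*}
\prod_{p \le y}\left(1 - \frac1p\right)^{-1} \ = \ \prod_{p \le y}\,\sum_{j \ge 0} p^{-j} \ \ge \ \sum_{n \le y}\frac1n,
\end{equation*}
the inequality holding because every $n \le y$ has all of its prime factors $\le y$. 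Letting $y\to\infty$ shows $\prod_p (1-1/p)^{-1}$ diverges, i.e.\ $\prod_p(1-1/p) = 0$. Feeding this back forces $\limsup_{x\to\infty}\pi(x)/x \le 0$, and since $\pi(x)/x \ge 0$ the limit in \eqref{Eq: ZeroDensity} exists and equals $0$.

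The main obstacle is exactly this last ingredient --- that the Euler product $\prod_p(1-1/p)$ vanishes, equivalently that $\sum_p 1/p$ diverges; everything preceding it is elementary counting. Some such input is genuinely unavoidable, since a density bound built from only finitely many primes can never improve on the positive constant $\prod_{i=1}^{k}(1-1/p_i)$. If one wishes to keep the whole argument self-contained, one can instead splice in Erd\H{o}s's elementary proof that $\sum_p 1/p = \infty$ (partition the integers $n \le N$ according to whether they have a prime factor exceeding $p_k$, and bound the complementary set by writing each of its members as a $p_k$-smooth number times a perfect square), which dovetails with the paper's theme of comparing several routes to the same conclusion.
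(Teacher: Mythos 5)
Your proof is correct, but it takes a genuinely different route from the one the paper gives in its appendix. The paper's proof is a Chebyshev-style argument: it bounds the central binomial coefficient by $\binom{2n}{n} < 4^n$, observes that every prime in $(n,2n]$ divides $\binom{2n}{n}$ to get $n^{\pi(2n)-\pi(n)} < 4^n$, and then stitches these bounds together by dyadic decomposition to conclude $\pi(x)/x < 1/x + 4/\sqrt{x} + 8/\log_4 x$. That argument buys an explicit rate of decay (indeed the correct order of magnitude $\pi(x) = O(x/\log x)$), whereas your sieve gives only the qualitative statement that the density is zero, at the cost of needing the nontrivial input that $\prod_p(1-1/p)$ vanishes --- which you correctly supply via the harmonic series. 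Interestingly, your approach is exactly the one the paper alludes to in Section \ref{Sec: refined}, where the inclusion--exclusion proof is left as an exercise with the warning that ``it is very easy to make a small assumption and have an invalid argument.'' You avoid the standard pitfall: the naive version lets $k$ grow with $x$ and ignores the $2^k$ error from replacing $\lfloor x/d\rfloor$ by $x/d$, whereas you hold $k$ fixed, take $\limsup_{x\to\infty}$ first, and only then let $k\to\infty$, which is the correct order of operations. One cosmetic remark: your first display is an upper bound only up to the harmless inclusion of $n=1$ in the coprime count, which does not affect anything; and the degenerate case of finitely many primes is already covered since your Euler-product divergence forces the primes to be infinite in number. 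Both proofs are worth knowing; yours connects the density statement to Euler's proof of the infinitude of primes, while the paper's connects it to Bertrand-postulate-type estimates.
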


We give the proof in Section \ref{sec:proofpropzerodense}; if you know what quantities to study it's not hard, but there is a bit of an art to figuring out what should be studied. Rather than going down that technical detour now, we see why it's a worthwhile route to explore.

Our goal, Proposition \ref{Prop: Telhcirid} (there are infinitely many composites congruent to $b$ modulo $a$) follows readily from Proposition \ref{Prop: ZeroDense}.

\begin{proof}[(First) proof of Proposition \ref{Prop: Telhcirid}]
Assume Proposition \ref{Prop: Telhcirid} is false for some integers $a$ and $b$. Then there exists an integer $K$ such that, for all $k > K$, $ak + b$ is prime, which implies the primes have positive density (of at least $1/|a|$; of course $a$ cannot be zero!).\end{proof}

\ \\

This approach is far stronger than we need; it yields in the limit that 100\% of the inputs yield composite numbers. Further, it doesn't give much insight into why Proposition \ref{Prop: Telhcirid} is true. So we continue our search for a simpler proof of the weaker result, namely that there are infinitely many composites in the sequence (and not that almost all inputs give composites). The next, more direct proof, better illuminates the present difficulties.

\begin{proof}[(Second) proof of Proposition \ref{Prop: Telhcirid}]
    Suppose first that $|b| > 1$ (so it's not $\pm 1$). Then the subsequence $\{a(bm) + b\}^{\infty}_{m = 1}$ is entirely comprised of composite numbers.\footnote{Most students have no trouble finding this explicit example. Note that it gives infinitely many composites, but it doesn't give that almost all inputs are composite!}
    
    Now we consider the case where $b = \pm 1$. It's natural to try to find a special sequence again, but this time it's not as clear what to try, and many students over the years missed this part.  
    
    We want to construct a sequence of composites that are each $1$ away from some multiple of $a$. We use the method of \textit{divine inspiration} \cite{MiP}: that is, we give a choice of $n$ where $an+b$ can be written as a product of two integers greater than 1, and once we can verify that works we are done! (We'll explain later how we came to be so inspired.)

     Choosing $n = a(am + b)+m$ gives
    \begin{equation*}
        an + b \ = \ a(a(am+b)+m) + b \ = \ (a^2+1)(am + b),
    \end{equation*}
    Since $a\neq 0$, we know that $a^2 + 1\ge 2$. For sufficiently large $m$, $|am + b| > 1$, making $an + b$ a composite. 
\end{proof}

Let's briefly discuss how we knew to try $n = a(am+b)+m$ in the above proof. We write $n$ as $\xi + \gamma$, where $\xi$ and $\gamma$ are free. We want to be able to factor
$$a(\xi + \gamma) + b\ =\ a\xi + (a\gamma+b),$$
which is possible if $\xi$ is a multiple of $(a\gamma+b)$; while there are many multiples we can choose, a good one is $a(a\gamma+b)$ as the $a$ reinforces the $a$ we have, ensuring positive quantities. In that case,
$$an + b\ =\ a^2(a\gamma+b) + (a\gamma+b)\ =\ (a^2+1)(a\gamma+b).$$
Now let $\gamma = m$. By no means do we claim that we give the \textit{only} valid construction for this proof technique; to a large extent, we simply follow our noses. This goes to show the importance of one's intuition when solving problems.

The third time, perhaps, is the charm. Here is yet another proof. The idea is to use extensions of $$x^2 - y^2 \ =\  (x - y) (x + y) \ \ \ {\rm and} \ \ \ x^3 + y^3 \ = \ (x + y)(x^2 - xy + y^2).$$

\begin{proof}[Third proof of Proposition \ref{Prop: Telhcirid}]
If $|b| > 1$, then $an + b$ is composite whenever $n$ is a multiple of $b$. Suppose that $b = \pm 1$. 
Without loss of generality, assume $a \ge 1$ (otherwise, we consider $-an-b$).
Our arithmetic progression is then $an \pm 1$. We handle the two cases separately.

For $n = 3^{2k+1}a^{2k}$ ($k\in \mathbb{N}$), we have
$$an - 1\ =\ (3a)^{2k+1} - 1\ =\ (3a-1)((3a)^{2k} + (3a)^{2k-1} + \cdots + 1).$$
Since $3a-1$ and $(3a)^{2k} + (3a)^{2k-1} + \cdots + 1$ are greater than $1$, we know that $an-1$ is composite. 
The same reasoning applies to
$$an + 1\ =\ (3a)^{2k+1} + 1\ =\ (3a+1)((3a)^{2k} - (3a)^{2k-1} + \cdots + 1).$$
\end{proof}

\section{Asking more refined questions}\label{Sec: refined} 

We shift gears and discuss creating new questions which can hopefully be answered by students (either as another exam question or as a research project), using the proof ideas of Proposition \ref{Prop: Telhcirid} as inspiration. A general maxim for generating such problems is the following: If one successfully produces a qualitative result (e.g., ``there exists,'' ``for some $n$,'' ``happens infinitely often,'' etc.), one then tries to produce quantitative results carrying more information (e.g., ``when $n$ is equal to,'' ``with the density of,'' ``is bounded by,'' etc.). In our attempt to solve a more refined problem, we may pleasantly encounter alternative solutions to the original one.  We've seen both qualitative and quantitative proofs of Proposition \ref{Prop: Telhcirid}, and now we illustrate this practice further. 

We may ask, for a given positive integer $N$, are there always $N$ \textit{consecutive} composites in our arithmetic progressions $an+b$ $(a > 0)$? The answer is yes, thanks to the zero density of primes (Proposition \ref{Prop: ZeroDense}) because otherwise the density of primes would be at least $1/aN > 0$, a contradiction. Of course, this argument is not entirely elementary; is there an easier way to see this?

Yes! We can avoid using Proposition \ref{Prop: ZeroDense} by observing that for each positive integer $m > 2$, all integers in $S_m = \{m!+2, m!+3, \ldots, m!+m\}$ are composite. Since $|S_m| = m$, if we choose $m$ sufficiently large\footnote{How large must $m$ be in terms of $a$ and $N$?} relative to $a$, we can ensure we have $N$ consecutive terms in our arithmetic progression are in $S_m$. \\ \

For another question: What proportion of natural numbers $n$ generate composite numbers in our progression $an + b$? In other words, as $x \to \infty$ what fraction of $n \le x$ lead to $an+b$ is composite? The density of such $n$ is $1$, and again we can see this by appealing to our result that the density of the primes is zero, but let's take this opportunity to discuss how to elementarily prove Proposition \ref{Prop: ZeroDense}. 

There are two possibilities: there are only finitely many primes, or there are infinitely many primes. If there are only finitely many primes in the sequence, then trivially the proportion of $n$ making $an+b$ converges to 100\%. 

Thus we're left with the case when there are infinitely many primes; we want to use as few properties of the primes as we can get away with. The idea is to use Inclusion - Exclusion. We'll leave it as an exercise to prove, but be warned: it is very easy to make a small assumption and have an invalid argument; one has to be very careful in doing such arguments.\footnote{Or see Tao's post at \bburl{https://mathoverflow.net/questions/381456/what-is-the-simplest-proof-that-the-density-of-primes-goes-to-zero}.}

Last but not least, we can, of course, ask for the longest run of primes in our arithmetic progression $an+b$ as $n\rightarrow\infty$. Since 
$$a(a(am+b) + m) + b\ =\ (a^2+1)(am+b),$$
and $a^2+1, am+b > 1$ for sufficiently large $m\in \mathbb{N}$, we know that $an + b$ is composite when $n = a(am+b) + m$. Therefore, as $n\rightarrow\infty$, a run of primes cannot be longer than $a^2$. This bound is sharp when $a = 1$, but not when $a = 2$ and $b = 1$, for example. To see why, note that $2k+1, 2k+3$, and $2k+5$ form a complete residue system modulo $3$, so exactly one of them is divisible by $3$. If they're all primes, then one of them is $3$. This happens only when $k = 1$, in which case neither $2k-1$ nor $2k+7$ is a prime. Hence, as $n\rightarrow\infty$, the longest run of primes in $2n+1$ is $2$. Meanwhile, our bound is $a^2 = 4$. We leave it to the reader to investigate the problem of the longest run of primes in an arithmetic progression as $a$ and $b$ vary! 

\section{Relating our problem to advanced results} 

Now that we've given the reader ample groundwork to play with extensions and variants of Dirichlet's (and Telhcirid's) Theorem, we demonstrate the importance of this mathematical play by connecting our discussion to more sophisticated results in number theory. Indeed, as educators, we should show students the endless possibilities of extending a problem and the benefits of doing so, including the following.

\begin{enumerate}
\item[a)] After several extensions of a problem, students will likely encounter research-level mathematics. The urge to solve an extended version of a problem they previously solved serves as a strong motivation to study advanced results and make new 
contributions. 

\item[b)] We may also give students a relatively challenging problem, possibly allow them to assume an advanced result, and tell them to use what they just learned in the course to solve the problem in a few lines. This fosters their understanding of the class material and helps portray its significance in a broader mathematical context, and hopefully motivates them to continue onward and understand the deeper results.
\end{enumerate}

Let's illustrate item a) first. To extend a problem, a common method is to review the ideas used in its solution and check if we've exploited their power to the fullest. For example, combining Dirichlet's Theorem on arithmetic progressions and explicit constructions of composites, we can ask about $k$-composites; these are numbers which are the product of exactly $k$ distinct primes.

The problem statement above illustrates a common issue: it's not always clear what's the right generalization or definition. Do we want $k$ distinct primes to divide our number, or do we want exactly $k$ primes? For example, $12 = 2^2 3$; is this a 2-composite number or should this be a 3-composite number as it's divisible by 2 twice and 3 once? Either definition is fine, we just have to be clear what we mean; frequently such introspection leads to related problems to study! We'll figure out what we mean to study as we start exploring; this is not uncommon - until we dive in to the problem it's not clear what the right definition should be.

Thus, let's try to solve the following question (and in the course of doing so figure out what we mean by $k$-composite!): \emph{For $a$ and $b$ relatively prime, are there infinitely many $k$-composites in an arithmetic progression $an+b$?} 

We prove that the answer is yes. The case $k = 1$ follows immediately from Dirichlet's Theorem, which we'll freely use. For a proof, see \cite{Da, Se}. Let's consider $k = 2$. From the $k=1$ case applied to the relatively prime pair $a^2$ and 1 we can choose a positive integer $s$ such that $sa^2+1$ is prime. Appealing once again to the Method of Divine Inspiration (you're encouraged to discover the path we took to such enlightenment) we take $n = sa(am+b) + m$ with $m$ free in our arithmetic progression, and note $an+b$ becomes 
\begin{equation}\label{e1}a(\underbrace{sa(am+b)+m}_{n_s(m)}) + b\ =\ (sa^2+1)(am+b).\end{equation}
Since $am+b$ is prime for infinitely many $m$ by Dirichlet's Theorem, $an_s(m)+b$ is $2$-composite infinitely often. At this point we now have an argument for what we mean by $k$-composite. It's possible that $sa^2+1$ and $am+b$ are the same prime, so perhaps we want to have exactly two prime factors, and they can be the same. Of course, since we have infinitely many $m$ such that $am+b$ is prime it's easy to force the two factors to be distinct primes; if they happen to be the same just take a larger $m$, which we know we can do by Dirichlet's Theorem.

A moment of reflection reveals that  \eqref{e1} represents certain numbers in an arithmetic progression as a product of a prime and numbers in the same arithmetic progression. This allows us to use induction to prove the infinitude of $k$-composites. Indeed, for each time $am+b$ is $2$-composite, we pick a sufficiently large $s$ such that $sa^2+1$ is a prime larger than $am+b$. Then $(sa^2+1)(am+b)$ is $3$-composite. 

Going further, we can ask about the density of $k$-composites in our arithmetic progression, which turns out to be $0$, though not surprisingly we're leaving elementary results and now appealing to beautiful theorems in the literature. To see this, we recall the beautiful Erd\H{o}s-Kac theorem \cite{EK}, which states that for any fixed $a < b$,
$$\lim_{x\rightarrow\infty} \frac{1}{x}\cdot \left|\left\{n\le x\,:\, a\le \frac{\omega(n)-\log\log n}{\sqrt{\log \log n}} \le b\right\}\right| \ =\ \frac{1}{\sqrt{2\pi}}\int_{a}^be^{-t^2/2}dt.$$
Consequently, almost all numbers $n$ have $\log\log n$ distinct prime factors. \\ \

Another direction is to investigate what happens when we replace $an+b$ by a non-constant polynomial $f(n)$ with integer coefficients. We can still easily and elementarily show that $f(n)$ is composite for infinitely many $n$. Write $f(x) = a_m x^m + a_{m-1}x^{m-1} +\cdots a_1x + a_0$ for some $m\ge 1$ and $a_m\neq 0$. Without loss of generality, assume that $a_m \ge 1$. Since $\lim_{x\rightarrow \infty} f(x) = \infty$, there exists $k\in \mathbb{N}$ such that $f(k) > 1$. Then for $j \in \mathbb{N}$, 
$$f(k + jf(k))\ =\ a_m (k+jf(k))^m + a_{m-1}(k+jf(k))^{m-1} +\cdots +a_1(k + jf(k)) + a_0$$ is a multiple of $f(k)$ and is greater than $f(k)$ for sufficiently large $j$. Hence, $f(k+jf(k))$ is composite and thus $f(n)$ is composite infinitely often.

While the above reasoning shows that we cannot have a polynomial which is always prime, it would be a disservice if we didn't mention Euler's function $n^2+n+41$, which produces primes for $0\le n\le 39$! Fendel \cite{Fe} connected polynomials of the form $n^2+n+C$ that produce ``many'' primes to principal ideal domains. On another related note, a positive integer $C$ is said to be an \textit{Euler's lucky number} if $n^2-n+C$ produces primes for all $1\le n\le C$. It follows from the work of Heegner \cite{He} and Stark \cite{St} that there are only six Euler's lucky numbers, namely 2, 3, 5, 11, 17, and 41 \cite{WM}. They form the sequence \seqnum{A014556} in the Online Encyclopedia of Integer Sequences (OEIS) \cite{Sl}. \\ \

Regarding item b), we can show students an application of the difference-of-squares formula through the following problem: Prove directly the infinitude of $3$-composites in the arithmetic progression $4n+3$, assuming the Twin Prime Conjecture (there are infinitely many primes $p$ such that $p+2$ is also prime). A quick solution is to observe that for $n = 5k^2-2$, 
$$4n+3 \ =\ 4(5k^2-2) + 3\ =\ 20k^2-5\ =\ 5(4k^2-1)\ =\ 5(2k-1)(2k+1).$$
Under the twin prime conjecture, the pair $2k-1$ and $2k+1$ are simultaneously primes infinitely often, and we are done. Can this argument be extended to prove there are infinitely many $4$-composites by assuming a conjecture about numbers, and if so what's the weakest conjecture you need to assume?

We end with one final problem, which gives a nice application of the Intermediate Value Theorem (IVT). Fermat's Last Theorem states that given an integer $r > 2$, there are no positive integers $x, y, z$ to the equation $x^r + y^r = z^r$. Does the conclusion still hold if $r$ is a real number? To solve this problem (see \cite{Mo}), consider the function $f(x) = 4^x+5^x-6^x$. Clearly, $f(2) > 0$ and $f(3) < 0$. The IVT guarantees the existence of $s\in (2, 3)$ such that $4^s+5^s-6^s = 0$; what kind of number is $s$? Thanks to Lenstra \cite{Le}, we know $s$ must be irrational, but it's unknown if it's transcendental.\footnote{If $\alpha$ is a root of a finite polynomial with integer coefficients then it is algebraic, otherwise it is transcendental (see \cite{MT-B}). Numbers such as $\sqrt{2}$ and more interestingly $\sqrt{\sqrt{2} + \sqrt{3}}$ are algebraic, while $e$ and $\pi$ are transcendental.} Perhaps the answer depends on what triple we use - could different triples lead to different types of $s$?


\section{Conclusion}

Presenting students with problems of increasing difficulty levels gives everyone a taste of success, builds up their confidence and persistence, and effectively improves their problem-solving skills. During the process, students may encounter new challenges that are great opportunities for us as educators to teach them how to respond to setbacks and impasses. Furthermore, these challenges sometimes result in a research problem that is both accessible to the undergraduates and interesting to the research community. The authors hope that this article gives a sense of the creative process in finding problems to engage students both inside the classroom and in research settings. We would like to end this article with a quote by G. Cantor: 

\vspace{0.3 cm}

\textit{In mathematics, the art of proposing a question must be held of higher value than solving it.}


\appendix

\section{Proof of Proposition \ref{Prop: ZeroDense}}\label{sec:proofpropzerodense}

\begin{proof}[Proof of Proposition \ref{Prop: ZeroDense}]
We reproduce  the proof given in \cite{mercerdensity}, though there are many others. 
By the Binomial Theorem, $$(x + y)^m \ = \ \sum_{k=0}^m {{m}\choose{k}} x^k y^{m-k}, \ \ \ {\rm with}\ \ \ \ {{m}\choose{k}} \ = \ \frac{m!}{k!(m-k)!};$$
remember that $\frac{m!}{k!(m-k)!}$ represents the number of ways to choose $k$ from $m$ distinct objects, where order does not matter, and therefore is an integer. 

Let's take $m = 2n$ and $x=y=1$, thus $$ \sum_{k=0}^m {{2n}\choose{k}} \ = \ 2^{2m}  \ = \ 4^n.$$ As all terms on the left are positive, looking at just the central term we see ${{2n}\choose{n}} < 4^n$. In addition, since ${{2n}\choose{n}} = \frac{(2n)!}{n!n!}$ is an integer, it must be a multiple of all the primes larger than $n$ as none of those primes divide the integers in the denominator. Thus each prime in the range $n + 1, n + 2, \dots, 2n$ divides this central binomial coefficient.\footnote{As an interesting aside, there's at least one prime number in this range; this result is known as Bertrand's postulate; see for example \cite{MT-B}.} Therefore, ${{2n}\choose{n}} > n^{\pi(2n) - \pi(n)}$, and since the central binomial coefficient is less than $4^n$, we have 
    \begin{equation}\label{Eq: coeffBound}
        n^{\pi(2n) - \pi(n)} \ < \ 4^n.
    \end{equation}
    By applying the natural logarithm to both sides of the inequality, we obtain
    \begin{equation*}
        \pi(2n) - \pi(n) \ <\ \frac{n\ln 4}{\ln n}.
    \end{equation*}

   We now use a standard trick: dyadic decomposition \cite{MiP}. Basically, we apply this observation many times with different choices of $n$, stitching them together to get the desired result. Specifically, if  we choose $n = 2^{k-1}$ for some $k \in \mathbb{N}$, we have that
    \begin{equation}\label{Eq: piSubtract}
\pi(2^{k}) - \pi(2^{k-1}) \ < \ \frac{2^{k - 1}\ln 4}{(k - 1)\ln 2} \ = \ \frac{2^k}{k - 1}.
    \end{equation}
    Summing over all $k$ from $2$ to $2m$ causes telescoping on the left-hand side of \eqref{Eq: piSubtract} and then
    \begin{align*}
        \pi(2^{2m}) - \pi(2) &\ < \ \left(\frac{2^2}{1} + \cdots + \frac{2^{m}}{m-1}\right) + \left(\frac{2^{m+1}}{m} + \cdots + \frac{2^{2m}}{2m-1}\right)\\
        &\ < \ 2^{m+1} + \frac{2^{m+1} + \cdots + 2^{2m}}{m}\ <\ 2^{m+1} + \frac{2^{2m+1}}{m}.
    \end{align*}
    Hence, 
    $$\pi(4^m) \ <\ 1 + 2^{m+1} + \frac{2^{2m+1}}{m}.$$
    A positive integer $x$ lives between two consecutive powers of $4$; given any $x$ there is an $m$ such that 
    $4^{m-1}<x\le 4^m$; we can re-write this as 
    \begin{equation*}
        m - 1 \ < \ \log_4 x \ \le \ m.
    \end{equation*}
    Choosing $m$ as appropriate for a given $x$, we have the inequality
    \begin{equation*}
        \pi(x) \ \leq \ \pi(4^m) \ < \ 1 + 2^{(1 + \log_4 x) + 1} + \frac{2^{2(1 + \log_4 x) + 1}}{\log_4 x}.
    \end{equation*}
       After simplifying and rearranging, we get 
    \begin{equation*}
        \frac{\pi(x)}{x} \ < \ \frac{1}{x} + \frac{4}{\sqrt{x}} + \frac{8}{\log_4 x},
    \end{equation*}
    and the right-hand side goes to $0$ as $x \rightarrow \infty$, completing the proof.
\end{proof}

This is only one of many proofs;  we encourage the reader to look for others and discuss with friends and classmates. Notice also that some steps like \eqref{Eq: coeffBound} are crude and could be made sharper; however, if all we care about is showing that the percentage of numbers up to $x$ that are prime tends to zero then there's no need. This illustrates a general principle in mathematical proof writing: frequently one is lazy, if it works great; if the bound is too crude then we work harder!


\ \\
\end{document}